\newtheorem{lettertheorem}{Theorem}
\newtheorem{theorem}{Theorem}[section]
\newtheorem{proposition}[theorem]{Proposition}
\newtheorem{example}[theorem]{Example}
\newtheorem{remark}[theorem]{Remark}
\begin{document}

\title{Meromorphic solutions of algebraic difference equations}

\thanks{The first author would like to thank the support of the discretionary budget (2016) of the President of the Open University of Japan.
The second author would like to thank the partial support by the Academy of Finland grants (\#286877) and (\#268009).}

\subjclass[2010]{Primary 39A45; Secondary 30D35}

\keywords{Algebraic difference equations; meromorphic functions; growth of meromorphic functions; continuous limit; periodic functions; Nevanlinna theory}

%\subtitle{Do you have a subtitle?\\ If so, write it here}

%\titlerunning{Short form of title}        % if too long for running head

\author{Katsuya Ishizaki \and Risto Korhonen}

%\authorrunning{Short form of author list} % if too long for running head

%\institute{K. Ishizaki \at
%The Open University of Japan\\
%2-11 Wakaba, Mihama-ku, Chiba\\
%261- 8586 JAPAN\\
%\email{ishizaki@ouj.ac.jp}           %  \\
%%             \emph{Present address:} of F. Author  %  if needed
%           \and
%           R. Korhonen \at
%Department of Physics and Mathematics\\
%University of Eastern Finland\\
%Joensuu Campus, P. O. Box 111\\
%FI-80101 Joensuu, Finland\\
%\email{risto.korhonen@uef.fi}
%}

%\date{Received: date / Accepted: date}
% The correct dates will be entered by the editor

\maketitle

\begin{abstract}
It is shown that the difference equation
\begin{equation}\label{abseq}
(\Delta f(z))^2=A(z)(f(z)f(z+1)-B(z)),
\end{equation}
where $A(z)$ and $B(z)$ are meromorphic functions, possesses a continuous limit to the differential equation
    \begin{equation}\label{abseq2}
    (w')^2=A(z)(w^2-1),
    \end{equation}
which extends to solutions in certain cases. In addition, if \eqref{abseq} possesses two distinct transcendental meromorphic solutions, it is shown that these solutions satisfy an algebraic relation, and that their growth behaviors are almost same in the sense of Nevanlinna under some conditions. Examples are given to discuss the sharpness of the results obtained. These properties are counterparts of the corresponding results on the algebraic differential equation \eqref{abseq2}.
\end{abstract}

\numberwithin{equation}{section}

\section{Introduction}
According to the classical Malmquist's theorem \cite{malmquist:13}, if the differential equation
    \begin{equation}\label{malmeq}
    w'=R(z,w),
    \end{equation}
where $R(z,w)$ is rational in both arguments, has a transcendental meromorphic solution, then \eqref{malmeq} reduces into a Riccati equation. Steinmetz \cite{steinmetz:78}, Bank and Kaufman \cite{bankk:80} have generalized Malmquist's result by showing that the equation
    \begin{equation}\label{malmeq2}
    (w')^n=R(z,w),
    \end{equation}
where again $R(z,w)$ is rational in both arguments, reduces into one in a list of six simple differential equations, after a suitable M\"obius transformation. One of these six equations is
    \begin{equation}\label{diffeq}
    (w')^2 = a(z)(w-b(z))^2(w-\tau_1)(w-\tau_2),
    \end{equation}
where $\tau_1,\tau_2$ are constants and $a(z)$ and $b(z)$ are rational functions. Equation \eqref{diffeq} can be transformed into
    \begin{equation}
    (w')^2=A(z)(w^2-1)\label{1.5}
    \end{equation}
by a simple linear transformation, provided that the coefficient $b(z)$ is a constant.

In this paper, we are concerned with meromorphic solutions of a difference equation of the form
\begin{equation}
(\Delta f(z))^2=A(z)(f(z)f(z+1)-B(z)),\label{1.1}
\end{equation}
where $A(z)$ and $B(z)$ are meromorphic functions, and $\Delta$ is the difference operator $\Delta f(z)=f(z+1)-f(z)$. We will show in Section~\ref{limitSec} below that \eqref{1.1} possesses a continuous limit to \eqref{1.5}, which, in some cases, extends to an explicit limit between the solutions of \eqref{1.1} and \eqref{1.5}, as well. In addition, we consider the order of growth and the value distribution of a meromorphic solution of \eqref{1.1}, see e.g.,~\cite{goldbergo:08},~\cite{hayman:64},~\cite{nevanlinna:70} for a description of Nevanlinna's value distribution theory. One of the questions to be
considered is whether
\begin{equation}
T(r,f_1)=T(r,f_2)(1+o(1)), \quad \text{as $r\to \infty$, $r\not\in E$}\label{1.4}
\end{equation}
holds for distinct meromorphic solutions $f_1(z)$ and $f_2(z)$ of \eqref{1.1}, where
$E$ is an exceptional set with finite logarithmic measure.
In \cite{ishizakit:07}, an algebraic relation for distinct meromorphic solutions to differential equation \eqref{1.5} is obtained, which yields that \eqref{1.4} is satisfied for any distinct transcendental
meromorphic solutions to \eqref{1.5} in place of $f_j$, $j=1, 2$. We discuss this result in more detail in Section~\ref{periodicSec} below.

Nevanlinna growth considerations of an equation related to \eqref{1.1} were considered by Liu in \cite{liu:14} in the following sense. Equation~\eqref{1.1} can be also written in the form
\begin{equation}
(\Delta f(z))^2-A(z)f(z)\Delta f(z)-A(z)f(z)^2+A(z)B(z)=0\label{1.2},
\end{equation}
which belongs to a class of algebraic difference equations of first order,
\begin{equation}
(\Delta g(z))^2+P(z,g(z))\Delta g(z)+Q(z,g(z))=0,\label{1.3}
\end{equation}
where $P(z,g)$ and $Q(z,g)$ are polynomials in $g$ with meromorphic coefficients.
Assume that all coefficients in \eqref{1.3} are rational functions and \eqref{1.3} possesses
a transcendental meromorphic solution $g(z)$ of finite order. Suppose in addition that $\deg_gP(z,g)\leq1$ and $\deg_gQ(z,g)=4$ (or $3$). Then,
by means of the difference analogues of the lemma on the logarithmic derivatives,
$g(z)$ has infinitely many poles, see e.g.,~\cite{halburdk:06JMAA},~\cite{chiangf:08},~\cite{lainey:07}.
On the other hand, by the power test for poles of a meromorphic solution $g(z)$ to \eqref{1.3},
$g(z)$ has only finitely many poles. This contradiction implies that \eqref{1.3} has no meromorphic solution of finite order, that is a generalization of Yanagihara's theorem~\cite{yanagihara:80} in some sense. By following this argument, and by using an extension of the the difference analogues of the lemma on the logarithmic derivatives obtained in \cite{halburdkt:14TAMS}, it follows that \eqref{1.3} does not have any transcendental meromorphic solutions of hyper-order $<1$.

This paper consists of five sections.
In Section~\ref{examplesSec}, we give several examples of \eqref{1.1}.
%in which one can see a solution of finite order has infinitely many poles.
Section~\ref{limitSec} is devoted to showing that \eqref{1.5} can be obtained from \eqref{1.1} by a continuous limit, which also extends to solutions under some conditions.
%We construct a connection between \eqref{1.1} and
%the differential equation that trigonometric function satisfy.
We consider the case when $A(z)$ and $B(z)$ are periodic functions of period 1 in Section~\ref{periodicSec}.
An algebraic relation among solutions to \eqref{1.1} is discussed.
With some conditions in Section~\ref{VG} it is shown that \eqref{1.4} holds for transcendental meromorphic solutions to \eqref{1.1}.

%At the end of this section, we note

\begin{remark}\label{rem1.1}
We consider the case $A(z)$ and $B(z)$ are constants in \eqref{1.1}.
For a solution $f(z)$ to \eqref{1.1}, we define $\hat{f}(z)=f(\kappa(z)+z)$, where $\kappa(z)$ is a periodic function of period 1.
Substituting $\kappa(z)+z$ in \eqref{1.1} in place of $z$, we see that $\hat{f}(z)$ is also
a solution of \eqref{1.1}, since $\hat{f}(z+1)=f(\kappa(z+1)+z+1)=f(\kappa(z)+z+1)$.
This means that the order of $\hat{f}(z)$ may be bigger than the order of $f(z)$,
and hence \eqref{1.4} is not always valid in general.
\end{remark}

\section{Examples}\label{examplesSec}

We have collected to this section a number of examples to which we keep referring to throughout the remainder of this paper.

\begin{example}\label{ex2.1}
Let $a\ne0$ be a constant, and set $f_1(z)=\sin az$.
We have $\Delta f_1(z)=2\sin \frac{a}{2}\cos a(z+\frac{1}{2})$, see e.g., \cite[Theorem~2.2]{kelleyp:01}.
This gives
\begin{equation*}
(\Delta f(z))^2=-\left(2\sin \frac{a}{2}\right)^2\left(\sin^2a\left(z+\frac{1}{2}\right)-1\right).
\end{equation*}
For the sake of simplicity we write $\alpha=\cos\frac{a}{2}$ and $\beta=\sin\frac{a}{2}$.
Then
\begin{align*}
\sin^2 a\left(z+\frac{1}{2}\right)&=\alpha^2\sin^2az+2\alpha\beta\sin az\cos az+\beta^2 \cos^2az\\
&=\sin az\left(\sqrt{(\alpha^2-\beta^2)^2+(2\alpha\beta)^2}\sin (az+\theta)\right)+\beta^2\\
&=\sin az \sin a\left(z+\frac{\theta}{a}\right)+\beta^2,
\end{align*}
with $\tan\theta=2\alpha\beta/(\alpha^2-\beta^2)=\tan a$. Hence $f_1(z)$ satisfies
\begin{equation}
(\Delta f(z))^2=-4\sin^2\frac{a}{2}\left(f(z)f(z+1)-\cos^2\frac{a}{2}\right).\label{2.1}
\end{equation}
By similar computations as above, we see that $f_2(z)=\cos az$ also satisfies the same difference equation \eqref{2.1}, which corresponds to \eqref{1.1} with $A(z)=-4\sin^2\frac{a}{2}$ and
$B(z)=\cos^2\frac{a}{2}$.
%Hence the functions
%$$\tilde f(z)=\frac{\sin az}{\cos\frac{a}{2}}\quad \text{and}\quad \tilde g(z)=\frac{\cos az}{\cos\frac{a}{2}}$$
%satisfy \eqref{1.1} with $A(z)=-4\sin^2\frac{a}{2}$, and satisfy \eqref{1.4}.
On the other hand, by means of Remark~\ref{rem1.1}, \eqref{1.4} does not always hold in general.
\end{example}

\begin{example}\label{ex2.2} Let $b$ be a non-zero complex number. The function
\begin{equation}
f_b(z)=\frac{1+be^{\pi i z}-e^{2\pi i z}}{e^{2 \pi i z}-1}\label{2.2}
\end{equation}
satisfies a difference equation ($A(z)=-4$ and $B(z)=1$ in \eqref{1.1})
\begin{equation}
(\Delta f(z))^2=-4(f(z)f(z+1)-1).\label{2.3}
\end{equation}
The functions $-f_b(z)$, $f_{b'}(z)$, $b'\ne b$ also satisfy \eqref{1.4}.
They have obviously the same order of growth.
However, there exists higher order solutions to \eqref{2.3}, which implies that \eqref{1.4} does not hold in general as we mentioned in Remark~\ref{rem1.1}.
Clearly, $f_b(z)$ has infinitely many poles, which are on real axis at $z=0$, $\pm 1$, $\pm 2$, \dots. Thus we have
$N(r,f_b)\sim r$, and hence $\infty$ is not a deficiency for $f_b(z)$.
By Remark~\ref{rem1.1}, we may construct meromorphic solutions to
\eqref{2.3} having infinitely many poles of higher order.
\end{example}

\begin{example}\label{ex2.3} Similar idea to Example~\ref{ex2.2} yields another example.
Let $\beta(z)\not\equiv0$ be arbitrary periodic function of period 1. The function
\begin{equation}
f_\beta(z)=\frac{1-\beta(z) e^{\pi i z}+e^{2\pi i z}}{e^{2 \pi i z}-1}\label{2.4}
\end{equation}
satisfies a difference equation ($A(z)=-4\beta(z)^2/(\beta(z)^2-4)$, and $B(z)=1$ in \eqref{1.1})
\begin{equation}
(\Delta f(z))^2=-\frac{4\beta(z)^2}{\beta(z)^2-4}(f(z)f(z+1)-1).\label{2.5}
\end{equation}
\end{example}

\begin{example}\label{ex2.4}  Let $Q(z)$ be a periodic function of period 1. The function
\begin{equation}
f(z)=\frac{z^2+Q(z)^2}{2Q(z)z}\label{2.6}
\end{equation}
satisfies a difference equation ($A(z)=1/(z(z+1))$, and $B(z)=(1+2z)^2/(4z(z+1))$ in \eqref{1.1})
\begin{equation}
(\Delta f(z))^2=\frac{1}{z(z+1)}\left(f(z)f(z+1)-\frac{(1+2z)^2}{4z(z+1)}\right).\label{2.7}
\end{equation}
\end{example}

\section{Continuous limit}\label{limitSec}%Section~4
In this section we apply continuous limits to give connections between solutions of certain classes of difference equations and solutions of the corresponding differential equations.
Shimomura~\cite{shimomura:12} applied the continuous limit from the discrete second Painlev\'e equation to
the second differential Painlev\'e equation in order to
observe asymptotic expansions of solutions.
Continuous limit has  been contributed mostly to Painlev\'e analysis, e.g.,~\cite[\S 50]{gromakls:02},~\cite{quispelcs:92},~\cite{ramanigh:91}. In~\cite{ishizaki:17}, the difference Riccati equation was discussed.

By a continuous limit we mean broadly the following. Let $k$ be a positive integer. We consider a difference equation
\begin{equation}
\Omega_0(z,f(z), f(z+1), \dots, f(z+k))=0.\label{3.1}
\end{equation}
Let $\varepsilon$ be a complex number. We set a pair of relations
\begin{equation}
\mu(z,t,\varepsilon)=0 \quad\text{ and}\quad\nu(f(z), w(t,\varepsilon), \varepsilon)=0.\label{3.2}
\end{equation}
%$\nobreak{\mu(z,t,\varepsilon)=0}$ and $\nu(f(z), w(t,\varepsilon), \varepsilon)=0$.
According to \eqref{3.2}, we transform \eqref{3.1} to a certain difference equation
\begin{equation}
\Omega_1(t,w(t,\varepsilon), w(t+\varepsilon,\varepsilon), \dots, w(t+k\varepsilon,\varepsilon))=0\label{3.3}
\end{equation}
with some conditions on coefficients of $\Omega_1$.
Letting $\varepsilon\to0$, we derive a differential equation
\begin{equation}
\Omega_2(t,w(t,0),w'(t,0),\dots, w^{(k)}(t,0))=0.\label{3.4}
\end{equation}
Depending on the conditions on coefficients, the arguments above give wide scope of ideas.
On the other hand, it would remain generally within the framework of the formal discussion.

We discuss next a continuous limit from \eqref{1.1} to \eqref{1.5}. To do this, we set
\begin{equation}
t=\varepsilon z\quad \text{and}\quad f(z)=w(t,\varepsilon),\label{3.5}
\end{equation}
in \eqref{1.1} and give $\varepsilon^2 \tilde A(t,\varepsilon)$ and $\tilde{B}(t,\varepsilon)$
in place of $A(z)$ and $B(z)$, respectively.
Since $f(z+1)=w(\varepsilon(z+1),\varepsilon)=w(\varepsilon z+\varepsilon,\varepsilon)= w(t+\varepsilon,\varepsilon)$, we have
\begin{equation}
(w(t+\varepsilon,\varepsilon)-w(t,\varepsilon))^2=\varepsilon^2\tilde{A}(t,\varepsilon)(w(t,\varepsilon)w(t+\varepsilon,\varepsilon)-\tilde{B}(t,\varepsilon)).\label{3.6}
\end{equation}
Assume that
\begin{equation}
\lim_{\varepsilon\to0}\tilde{A}(t,\varepsilon)=\tilde{A}(t,0)\quad\text{and}\quad \lim_{\varepsilon\to0}\tilde{B}(t,\varepsilon)=\tilde{B}(t,0).\label{3.7}
\end{equation}
Letting $\varepsilon\to0$, we see that $\displaystyle w(t,0)=\lim_{\varepsilon\to0}w(t,\varepsilon)$, if exists, satisfies the differential equation
\begin{equation}
w'(t)^2=\tilde A(t)(w(t)^2-\tilde B(t)),\label{3.8}
 \end{equation}
with $\tilde A(t)=\tilde A(t,0)$ and $\tilde B(t)=\tilde B(t,0)$.
If we set
\begin{equation}
A(z)=\varepsilon^2 \tilde A(t,\varepsilon)\quad \text{and}\quad B(z)=\tilde{B}(t,\varepsilon),\label{3.9}
\end{equation}
then this builds a \textit{direct connection}. When we do not assume \eqref{3.9}, we call the derivation
\eqref{3.6} an \textit{indirect connection}. We compare these connections in Examples~\ref{ex3.1} and~\ref{ex3.2} below.

\begin{example}\label{ex3.1}
We observe \eqref{2.7} in Example~\ref{2.4} in view of the continuous limit assuming the condition \eqref{3.9}, i.e., a direct connection.
According to \eqref{3.5} and \eqref{3.9}, we derive a difference equation of the form
\eqref{3.6} from \eqref{2.7}. We compute
$$
\tilde A(t,\varepsilon)=\frac{1}{\varepsilon^2}\frac{1}{z(z+1)}=\frac{1}{\varepsilon^2}\frac{1}{\frac{t}{\varepsilon}(\frac{t}{\varepsilon}+1)}=\frac{1}{t(t+\varepsilon)}
$$
and
$$
\tilde B(t,\varepsilon)=\frac{(1+2z)^2}{4z(z+1)}=\frac{(1+\frac{2t}{\varepsilon})^2}{4\frac{t}{\varepsilon}(\frac{t}{\varepsilon}+1)}=\frac{(2t+\varepsilon)^2}{4t(t+\varepsilon)},
$$
which implies that $\tilde{A}(t,0)=1/t^2$ and $\tilde{B}(t,0)=1$.
The corresponding equation given by \eqref{3.8}
\begin{equation}
(w')^2=\frac{1}{t^2}(w^2-1)\label{3.10}
\end{equation}
possesses a solution $w_C(t)=(C^2+t^2)/(2Ct)$, where $C$ is an arbitrary constant.
By \eqref{2.6} and \eqref{3.5}, we have
\begin{equation*}
w(t,\varepsilon)=\frac{(\frac{t}{\varepsilon})^2+Q(\frac{t}{\varepsilon})^2}{2Q(\frac{t}{\varepsilon})\frac{t}{\varepsilon}}=\frac{t^2+(\varepsilon Q(\frac{t}{\varepsilon}))^2}{2t(\varepsilon Q(\frac{t}{\varepsilon}))},
\end{equation*}
which corresponds to $w_C(z)$ if $\displaystyle \lim_{n\to0} \varepsilon_n Q(\tfrac{t}{\varepsilon_n})=C$ would hold for some $\{\varepsilon_n\}$, $\varepsilon_n\to0$.
In fact, we assume that $Q(z)$ is a non-constant meromorphic periodic function of period 1.
Since $Q(z)$ is transcendental, $Q(tz)-Cz$ has infinitely many zeros $\{z_n\}$,
$z_n\to\infty$ for fixed $t$ and $C$
with at most two exceptions. Then we define $\varepsilon_n=1/z_n$, which implies the assertion.
\end{example}

\begin{example}\label{ex3.2}
Consider \eqref{2.3} in Example~\ref{2.2}.
Here we do not assume the condition \eqref{3.9}, i.e., we are aiming for an indirect connection.
By \eqref{3.5}, the difference equation \eqref{2.3} is transformed
\begin{equation}
(w(t+\varepsilon,\varepsilon)-w(t,\varepsilon))^2=-4(w(t,\varepsilon)w(t+\varepsilon,\varepsilon)-1)\label{3.11}
\end{equation}
We set $\varepsilon^2 \tilde A(t,\varepsilon)$ in place of $-4$ in \eqref{3.11}, where
$\tilde A(t,\varepsilon)=-4\sin^2\varepsilon/\varepsilon^2$, and $\tilde B(t,\varepsilon)=\sin^2\varepsilon-1$ in place of $-1$ in \eqref{3.11}. Then we obtain a difference equation
corresponding to \eqref{3.3}
\begin{equation}
(w(t+\varepsilon,\varepsilon)-w(t,\varepsilon))^2=-4\sin^2\varepsilon(w(t,\varepsilon)w(t+\varepsilon,\varepsilon)+\sin^2\varepsilon-1).\label{3.12}
\end{equation}
The function $w(t+\varepsilon,\varepsilon)=\sin(2t+\varphi(\varepsilon))$ satisfies \eqref{3.11}, which is confirmed by the formula $\sin^2(2h+k)=\sin(2(h+k))\sin2h+\sin^2k$.
Assume that $\displaystyle \lim_{\varepsilon\to0}\varphi(\varepsilon)=0$.
Then letting $\varepsilon\to0$ in \eqref{3.12}, we see that $w(t,0)=\sin2t$ satisfies a differential equation corresponding to \eqref{3.8} with
 $\tilde{A}(t,0)=-4$ and $\tilde{B}(t,0)=1$, i.e.,
\begin{equation}
(w')^2=-4(w^2-1).\label{3.13}
\end{equation}
Applying an indirect connection,
we can start to discuss a difference equation \eqref{3.3} which
has close properties to a differential equation \eqref{3.4}.
This argument could yield various discoveries. On the other hand, indirect connections
might lose some properties of the original difference equation \eqref{3.1}.
For example, a transcendental meromorphic solution $f_b(z)$ to \eqref{2.3} given by \eqref{2.2} possesses infinitely many poles.
However, any transcendental meromorphic solution of \eqref{3.13} has no poles.
\end{example}

Most of the continuous limits obtained so far in the literature, see, e.g., \cite{grammaticosnr:99,halburdk:07JPA} and the reference therein, are indirect.

\section{Periodic case}\label{periodicSec}
In this section, we consider the case when $A(z)$ and $B(z)$ are periodic functions of period 1.
We have
\begin{proposition}\label{prop4.1}
Suppose that $A(z)$ and $B(z)$ are periodic functions of period 1 in \eqref{1.1},
and suppose that \eqref{1.1} possesses a meromorphic solution $f(z)$.
Then either $f(z)$ is a periodic function of period 2,
or $f(z)$ satisfies a linear difference equation of second order
\begin{equation}
\Delta^2 f(z)-A(z)\Delta f(z)-A(z)f(z)=0.\label{4.1}
\end{equation}
\end{proposition}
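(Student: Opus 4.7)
My plan is to exploit the periodicity of $A$ and $B$ by shifting \eqref{1.1} by one and comparing with the original equation. Writing \eqref{1.1} as
\begin{equation*}
(f(z+1)-f(z))^2=A(z)(f(z)f(z+1)-B(z)),
\end{equation*}
and replacing $z$ by $z+1$ while using $A(z+1)=A(z)$, $B(z+1)=B(z)$, I obtain
\begin{equation*}
(f(z+2)-f(z+1))^2=A(z)(f(z+1)f(z+2)-B(z)).
\end{equation*}

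Next I would subtract the first from the second to eliminate the $A(z)B(z)$ term. The right-hand side becomes $A(z)f(z+1)(f(z+2)-f(z))$, and the left-hand side is a difference of squares that factors as
\begin{equation*}
\bigl[(f(z+2)-f(z+1))-(f(z+1)-f(z))\bigr]\bigl[(f(z+2)-f(z+1))+(f(z+1)-f(z))\bigr]=\Delta^2 f(z)\cdot(f(z+2)-f(z)).
\end{equation*}
After rearranging and observing that $A(z)\Delta f(z)+A(z)f(z)=A(z)f(z+1)$, the subtracted identity becomes
\begin{equation*}
(f(z+2)-f(z))\bigl[\Delta^2 f(z)-A(z)\Delta f(z)-A(z)f(z)\bigr]=0.
\end{equation*}

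Since $f$ is meromorphic, the ring of meromorphic functions is an integral domain, so one of the two factors must vanish identically. If $f(z+2)-f(z)\equiv 0$, then $f$ has period $2$, while if the bracketed factor vanishes identically, then $f$ satisfies \eqref{4.1}. This dichotomy is precisely the statement of the proposition.

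I do not anticipate a genuine obstacle here; the algebraic manipulation is short and the main conceptual content is recognizing that subtracting the shifted equation from the original produces a clean factorization. The one point that deserves explicit mention in the write-up is the domain property of meromorphic functions, which is what promotes the factored identity into the required alternative.
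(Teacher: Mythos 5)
Your proof is correct, and while it follows the same overall strategy as the paper (shift \eqref{1.1} by one, use the periodicity of $A$ and $B$, and factor out $f(z+2)-f(z)$), your elimination step is genuinely different and cleaner. The paper eliminates $A(z)$ by cross-multiplying the two equations, which produces a cubic second factor; in the non-periodic case it then has to solve for $f(z+2)$, compute $\Delta^2 f(z)$, and invoke \eqref{1.1} a second time to replace $(\Delta f(z))^2/(f(z)f(z+1)-B(z))$ by $A(z)$ before arriving at \eqref{4.1}. You instead subtract the two equations while keeping $A(z)$, so that the $A(z)B(z)$ terms cancel and the left-hand side factors as a difference of squares into $\Delta^2 f(z)\cdot(f(z+2)-f(z))$; this lands directly on
\begin{equation*}
(f(z+2)-f(z))\bigl[\Delta^2 f(z)-A(z)f(z+1)\bigr]=0
\end{equation*}
with the second factor already linear, and it sidesteps the implicit division by $f(z)f(z+1)-B(z)$ in the paper's argument. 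Your closing appeal to the fact that meromorphic functions form an integral domain is exactly the right justification for splitting the identity into the stated alternative, and it is the same (tacit) step the paper relies on.
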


%{\sf Proof.}\enspace

\begin{proof}
From \eqref{1.1},
\begin{align*}
(\Delta f(z+1))^2&=A(z+1)(f(z+1)f(z+2)-B(z+1))\\
&=A(z)(f(z+1)f(z+2)-B(z)).
\end{align*}
Eliminating $A(z)$ from this equation and \eqref{1.1}, we obtain
\begin{multline*}
(f(z+2)-f(z))\\
\times \left(f(z+1)^3-(2f(z+1)-f(z))B(z)+(B(z)-f(z)f(z+1))f(z+2)\right)=0,
\end{multline*}
which implies that $f(z)$ is a periodic function of period 2, or
\begin{equation}
f(z+2)=\frac{f(z+1)^3-(2f(z+1)-f(z))B(z)}{f(z)f(z+1)-B(z)}.\label{4.2}
\end{equation}
Using \eqref{4.2} and \eqref{1.1}, we have
\begin{align*}
\Delta^2 f(z)&=f(z+2)-2f(z+1)+f(z)\\
&=\frac{f(z+1)^3-(2f(z+1)-f(z))B(z)}{f(z)f(z+1)-B(z)}-2f(z+1)+f(z)\\
&=\frac{(\Delta f(z))^2f(z+1)}{f(z)f(z+1)-B(z)}=A(z)f(z+1),
\end{align*}
which concludes \eqref{4.1}.
\end{proof}

%\quad $\square$

\begin{remark}\label{rem4.1} Two functions $f_1(z)$ and $f_2(z)$
in Example~\ref{ex2.1} are not always periodic
functions of period 2. They satisfy \eqref{4.1} with $A(z)=-4\sin^2\frac{a}{2}$.
Functions $f_b(z)$ in Example~\ref{ex2.2}, and $f_\beta(z)$ in Example~\ref{ex2.3} are periodic functions of period 2.
\end{remark}

The following result on the algebraic dependence of the solutions of the differential equation \eqref{1.5} was obtained in \cite[Theorem~2.1]{ishizakit:07}.

%For example, if $w(t)$ is a solution of \eqref{3.8}, then $-w(t)$ is also a solution of \eqref{3.8}.
%It is easy to see that if $f(z)$ is a solution of \eqref{1.1}, then $-f(z)$ is also a solution of \eqref{1.1}.

%Concerning the solutions to \eqref{1.5} or \eqref{3.8} when $\tilde B(t)=1$, we have~

%We consider the difference counterparts of the properties of the differential equation \eqref{3.8}.
%For example, if $w(t)$ is a solution of \eqref{3.8}, then $-w(t)$ is also a solution of \eqref{3.8}.
%It is easy to see that if $f(z)$ is a solution of \eqref{1.1}, then $-f(z)$ is also a solution of \eqref{1.1}.
%
%Concerning the solutions to \eqref{1.5} or \eqref{3.8} when $\tilde B(t)=1$, we have~\cite[Theorem~2.1]{IT} \vspace{0.5cm}

\begin{lettertheorem}[\cite{ishizakit:07}]
Assume that $A(z)$ is a rational function in \eqref{1.5}. Suppose that \eqref{1.5} possesses distinct
transcendental meromorphic solutions $w_1$ and $w_2$. Then there exists a constant $c$ such that
\begin{equation}
w_1^2+2cw_1w_2+w_2^2=1-c^2.\label{4.3}
\end{equation}
\end{lettertheorem}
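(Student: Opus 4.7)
The plan is to derive a second-order linear ODE common to both solutions, extract an explicit first integral built from the pair $(w_1, w_2)$, and then eliminate the derivatives to land on the algebraic relation.

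First, differentiating $(w_i')^2 = A(w_i^2 - 1)$ and substituting $w_i^2 - 1 = (w_i')^2/A$ (valid since $A \not\equiv 0$ and $w_i' \not\equiv 0$, the latter because each $w_i$ is transcendental) yields
\[
w_i'' = \frac{A'}{2A}\, w_i' + A\, w_i, \qquad i = 1, 2,
\]
so that $w_1$ and $w_2$ are solutions of the same second-order linear homogeneous ODE.

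Next, I would introduce the meromorphic function $E := w_1' w_2' - A w_1 w_2$ and differentiate. Using the expression for $w_i''$ above, the terms not involving $A'$ should cancel against those coming from $(A w_1 w_2)'$, leaving $E' = (A'/A) E$. Consequently $(E/A)' = 0$, so $E/A$ is a constant; call it $c$. This produces the first integral
\[
w_1' w_2' = A(w_1 w_2 + c).
\]
Squaring and using $(w_i')^2 = A(w_i^2 - 1)$ on the left yields $A^2 (w_1 w_2 + c)^2 = A^2 (w_1^2 - 1)(w_2^2 - 1)$. Dividing by $A^2$ and expanding, the $w_1^2 w_2^2$ terms cancel and what remains is exactly
\[
w_1^2 + 2c w_1 w_2 + w_2^2 = 1 - c^2,
\]
which is \eqref{4.3}.

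The main obstacle is the identification of the combination $E = w_1' w_2' - A w_1 w_2$: once chosen, the verification of $E' = (A'/A) E$ is a few lines of bookkeeping, but a priori it is not obvious which polynomial expression in $(w_1, w_2, w_1', w_2')$ will be proportional to $A$. The distinctness hypothesis on $w_1, w_2$ is not actually used to produce $c$; it only ensures that the resulting identity is substantive, since in the degenerate cases $w_2 = \pm w_1$ the same derivation yields $c = \mp 1$ and \eqref{4.3} collapses to $(w_1 \mp w_2)^2 = 0$. The rationality of $A$ plays no role in this derivation either and could be weakened to, say, meromorphic $A$.
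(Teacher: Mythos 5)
Your derivation is correct: the reduction to the common linear equation $w_i''=\frac{A'}{2A}w_i'+Aw_i$ is valid (division by $w_i'$ is harmless since $w_i'\not\equiv 0$ for transcendental $w_i$, and the resulting identity of meromorphic functions extends past the isolated zeros of $w_i'$), the computation $E'=(A'/A)E$ for $E=w_1'w_2'-Aw_1w_2$ checks out, and squaring $w_1'w_2'=A(w_1w_2+c)$ does eliminate the $w_1^2w_2^2$ terms and yield \eqref{4.3}. Note, however, that the paper does not prove Theorem~A at all --- it is imported verbatim from \cite{ishizakit:07} --- so there is no in-paper proof to match against; the closest internal comparison is the paper's proof of Theorem~\ref{thm4.1}, the difference analogue. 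There the authors first pass to the linear second-order difference equation \eqref{4.1} (Proposition~\ref{prop4.1}) and then use the Casoratian $H=f_1\Delta f_2-f_2\Delta f_1$, shown to be $1$-periodic (the discrete surrogate of a constant), as the conserved quantity from which the algebraic relation \eqref{4.4} is squeezed out. Your argument is exactly the continuous-limit counterpart of that scheme, except that you use the bilinear first integral $w_1'w_2'-Aw_1w_2$ rather than the Wronskian $W=w_1w_2'-w_2w_1'$ (which satisfies $W'=\frac{A'}{2A}W$, so $W^2/A$ is constant, and leads to the same relation after one more squaring); your choice is the more economical of the two. Your closing remarks are also accurate: distinctness of $w_1,w_2$ is not needed to produce $c$, only to make \eqref{4.3} nondegenerate, and rationality of $A$ is not used in deriving the identity itself --- in \cite{ishizakit:07} it matters for the growth consequence $T(r,w_1)=T(r,w_2)+O(1)$, not for \eqref{4.3}.
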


The algebraic relation \eqref{4.3} implies that $T(r, w_1)=T(r,w_2)+O(1)$,
see~\cite[Corollary~2.1]{ishizakit:07}, which asserts that $w_1$ and $w_2$ satisfy \eqref{1.4} in place of $f_1$ and $f_2$. Remark~\ref{rem1.1} implies that \eqref{4.3}, or an identity like it, cannot be obtained in general for all pairs of distinct solutions to \eqref{1.1}, since there may exist solutions to \eqref{1.1} of distinct order of growth.
However, we are still interested in whether there exists an algebraic relation for solutions to \eqref{1.1} under some conditions. In the case of periodic coefficients, we have

\begin{theorem}\label{thm4.1}
Suppose that $A(z)$ and $B(z)$ are periodic functions of period~1 in \eqref{1.1},
and suppose that \eqref{1.1} possesses distinct transcendental meromorphic solutions $f_1(z)$ and $f_2(z)$ such that $f_j(z+2)\ne f_j(z)$, $j=1, 2$.
Then the Casoratian $f_1(z)\Delta f_2(z)-f_2(z)\Delta f_1(z)=H(z)$ is a periodic
function of period~1, and $f_1(z)$ and $f_2(z)$ satisfy the algebraic relation
\begin{multline}
A(z)((A(z)+4)f_1(z)^2f_2(z)^2-2B(z)(f_1(z)^2+f_2(z)^2))H(z)^2\\
-(A(z)B(z)(f_1(z)^2-f_2(z)^2))^2=H(z)^4,\label{4.4}
\end{multline}
where the coefficients are periodic functions of period 1.
\end{theorem}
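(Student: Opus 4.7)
The proof naturally breaks into two parts: first establishing that the Casoratian $H(z)$ is $1$-periodic, and then deriving the algebraic identity \eqref{4.4}. For the periodicity, I would invoke Proposition~\ref{prop4.1}: since $f_j(z+2) \ne f_j(z)$ by hypothesis, each solution satisfies the linear recurrence \eqref{4.1}, which rearranges to $f_j(z+2) = (2+A(z)) f_j(z+1) - f_j(z)$. Substituting this into $H(z+1) = f_1(z+1)f_2(z+2) - f_2(z+1)f_1(z+2)$, the symmetric $(2+A(z))f_1(z+1)f_2(z+1)$ contributions cancel, leaving exactly $f_1(z)f_2(z+1) - f_2(z)f_1(z+1) = H(z)$.

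For the algebraic identity, the key idea is to read the difference equation \eqref{1.1} at each point $z$ as a quadratic in the shifted value $f_j(z+1)$. Expanding $(\Delta f_j)^2 = A(f_j f_j(z+1) - B)$ gives
\begin{equation*}
f_j(z+1)^2 - (2+A(z))\, f_j(z) f_j(z+1) + f_j(z)^2 + A(z) B(z) = 0,
\end{equation*}
whose discriminant is $D_j(z) := A(z)\bigl((A(z)+4)f_j(z)^2 - 4B(z)\bigr)$. Solving for $f_j(z+1)$ and inserting into $H = f_1 f_2(z+1) - f_2 f_1(z+1)$, the $(2+A(z))f_1(z)f_2(z)$ parts cancel and one gets $2H(z) = \pm f_1(z)\sqrt{D_2(z)} \mp f_2(z)\sqrt{D_1(z)}$.

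From here the plan is elimination by repeated squaring. A first squaring produces a cross term of the form $\mp 2 f_1 f_2 \sqrt{D_1 D_2}$, which I would isolate on one side and square again. On expanding, the $(A+4)^2 f_1^4 f_2^4$ contributions and the $4B(A+4)f_1^2 f_2^2 (f_1^2 + f_2^2)$ contributions cancel between the two sides, while the remaining $B^2$-terms collapse via the identity $(f_1^2 + f_2^2)^2 - 4 f_1^2 f_2^2 = (f_1^2 - f_2^2)^2$. Rearranging and substituting back the explicit form of $D_j$ yields exactly \eqref{4.4}. Since $A(z)$ and $B(z)$ are the only $z$-dependent coefficients that enter, and both are $1$-periodic, the coefficients of \eqref{4.4} are $1$-periodic as well.

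The main obstacle is the bookkeeping in the double squaring, together with the logical issue of the branch ambiguity introduced by $\sqrt{D_1}$ and $\sqrt{D_2}$: one must check that the identity produced is independent of the choice of signs. This is automatic, since squaring twice eliminates all sign ambiguity. No Nevanlinna-theoretic input is needed for this theorem beyond the linear recurrence supplied by Proposition~\ref{prop4.1}; the statement is purely algebraic once the passage from \eqref{1.1} to \eqref{4.1} has been made.
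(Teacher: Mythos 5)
Your proposal is correct, and the first half (periodicity of $H$ via Proposition~\ref{prop4.1} and the Casoratian computation) is exactly the paper's argument. The derivation of \eqref{4.4}, however, takes a genuinely different elimination route. The paper never solves the quadratic for $f_j(z+1)$: instead it uses the Casoratian as a linear relation $f_2(z)\Delta f_1(z)=f_1(z)\Delta f_2(z)-H(z)$, squares once and substitutes \eqref{1.1} for both $(\Delta f_1)^2$ and $(\Delta f_2)^2$ to obtain a relation of the form $\Xi(z)=2f_1(z)H(z)\Delta f_2(z)$ with $\Xi$ a polynomial in $f_1,f_2$ with $1$-periodic coefficients, then squares once more and eliminates $\Delta f_2$ and $\Xi$; everything stays within rational operations and no branch ambiguity ever arises. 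You instead solve \eqref{1.1} as a quadratic in $f_j(z+1)$, write $2H=\epsilon_2 f_1\sqrt{D_2}-\epsilon_1 f_2\sqrt{D_1}$ with $D_j=A(z)\bigl((A(z)+4)f_j^2-4B(z)\bigr)$, and eliminate the radicals by double squaring. Your computation checks out: $f_1^2D_2-f_2^2D_1=-4A B(f_1^2-f_2^2)$ and $f_1^2D_2+f_2^2D_1=2A\bigl((A+4)f_1^2f_2^2-2B(f_1^2+f_2^2)\bigr)$, so $(f_1^2D_2+f_2^2D_1-4H^2)^2=4f_1^2f_2^2D_1D_2$ rearranges precisely to \eqref{4.4}, and the final identity is indeed independent of the pointwise sign choices since only $D_1$, $D_2$ and $D_1D_2$ survive. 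What your route buys is transparency about where the discriminant comes from: $D_j=A(z)G(f_j)$ with $G$ as in \eqref{5.1}, which is exactly the quantity the paper rediscovers in Section~\ref{VG} via \eqref{5.12}; it also avoids the implicit division by $H(z)$ in the paper's final elimination step. What the paper's route buys is that all intermediate identities are between meromorphic functions, with no need to argue about branches of square roots at individual points.
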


%{\sf Proof.}\quad

\begin{proof}
By Proposition~\ref{prop4.1}, $f_1(z)$, $f_2(z)$ are solutions of linear homogeneous difference equation \eqref{4.1}, which can be written as
$$
f(z+2)-(A(z)+2)f(z+1)+f(z)=0.
$$
By the definition of the Casoratian,
\begin{align*}
H(z+1)&=\begin{vmatrix}
f_1(z+1)&f_2(z+1)\\
\Delta f_1(z+1)&\Delta f_2(z+1)
\end{vmatrix}
=\begin{vmatrix}
f_1(z+1)&f_2(z+1)\\
f_1(z+2)&f_2(z+2)
\end{vmatrix}  \\
&=\begin{vmatrix}
f_1(z+1)&f_2(z+1)\\
(A(z)+2)f_1(z+1)-f_1(z)&(A(z)+2)f_2(z+1)-f_2(z)
\end{vmatrix}\\
&=H(z).
\end{align*}
This shows that $H(z)$ is a periodic function of period 1. We write
\begin{equation}
f_2(z)\Delta f_1(z)=f_1(z)\Delta f_2(z)-H(z).\label{4.5}
\end{equation}
From \eqref{4.5} and \eqref{1.1},
\begin{align*}
f_2(z)^2&A(z)(f_1(z)f_1(z+1)-B(z))\\
&=f_2(z)^2(\Delta f_1(z))^2=(f_1(z)\Delta f_2(z)-H(z))^2\\
&=f_1(z)^2(\Delta f_2(z))^2-2f_1(z)H(z)\Delta f_2(z)+H(z)^2\\
&=f_1(z)^2A(z)(f_2(z)f_2(z+1)-B(z))-2f_1(z)H(z)\Delta f_2(z)+H(z)^2.
\end{align*}
Combining this and the definition of $H(z)$, we have
\begin{equation}
\Xi(z)=2f_1(z)H(z)\Delta f_2(z), \label{4.6}
 \end{equation}
where $\Xi(z)$ is a polynomial in $f_1(z)$ and $f_2(z)$ given by
 \begin{equation}
\Xi(z)=A(z)H(z)f_1(z)f_2(z)-A(z)B(z)(f_1(z)^2-f_2(z)^2)+H(z)^2,\label{4.7}
  \end{equation}
where the coefficients are periodic functions of period 1. Using \eqref{4.6} and \eqref{1.1}, we may compute
\begin{equation}
\Xi(z)^2=4f_1(z)^2H(z)^2A(z)(f_2(z)\Delta f_2(z)+f_2(z)^2-B(z)).\label{4.8}
\end{equation}
Eliminating $\Delta f_2(z)$ and $\Xi(z)$ from \eqref{4.6}, \eqref{4.7} and \eqref{4.8}, we obtain \eqref{4.4}. We have thus proved Theorem~\ref{thm4.1}.
\end{proof}

\begin{remark}
If in Theorem~\ref{thm4.1} both $f_1$ and $f_2$ are periodic of period $2$, but at least one of them is not $1$-periodic, then the algebraic relation \eqref{4.4} holds, but now with $H(z)$ having period $2$ instead of $1$. If both $f_1$ and $f_2$ are periodic functions of period $1$, then the Casoratian $H(z)$ vanishes and \eqref{4.4} does not hold.
\end{remark}

%\quad $\square$

\begin{example}\label{ex4.1} Suppose that $a\ne n\pi$, $n=0, \pm1,\pm2,\dots$ in Example~\ref{ex2.1}. Then $f_1(z)=\sin az$ and $f_2(z)=\cos az$ are not periodic functions of period~2,
and the Casoratian $H(z)=-\sin a$. In this case $A(z)=-4\sin^2\frac{a}{2}$ and $B(z)=\cos^2\frac{a}{2}$.
The relation \eqref{4.4} now reduces to
    $$-\sin^4 a\cdot(f_1(z)^2+f_2(z)^2-2)(f_1(z)^2+f_2(z)^2)=\sin^4 a,$$
i.e., $f_1(z)^2+f_2(z)^2=1$.
\end{example}

\section{Value distribution and Growth relation of solutions}\label{VG}
We suppose that \eqref{1.1} possesses two distinct meromorphic solutions $f_1(z)$ and $f_2(z)$. We discuss the relation between $f_1(z)$ and $f_2(z)$, for instance, whether \eqref{1.4} holds or not.
To this end, we define a set $\mathcal{M_{\text{E}}}$ of meromorphic functions as follows.
If $f\in \mathcal{M_{\text{E}}}$ then the multiplicities of all zeros and all of poles of $f(z)$ are even. In case $f(z)$ has no zeros and no poles, we allow $f$ to belong to $\mathcal{M_{\text{E}}}$. Denote by $G(f)$ a quadratic polynomial in $f$
\begin{equation}
G(f)=(A(z)+4)f^2-4B(z),\label{5.1}
 \end{equation}
where $A(z)$ and $B(z)$ are meromorphic functions given in \eqref{1.1}.
For example, the function $f_b(z)$ in Example~\ref{ex2.2} solves the difference equation \eqref{2.3} with $A(z)=-4$ and $B(z)=1$ and satisfies
$G(f_b)=-4$. This means that $G(f_b)\in \mathcal{M_{\text{E}}}$.
The function $f_\beta(z)$ in Example~\ref{ex2.3} solves the difference equation \eqref{2.5} with $A(z)=-4\beta(z)^2/(\beta(z)^2-4)$ and $B(z)=1$. We have
$$
G(f_\beta)=-4\left(\frac{(e^{2\pi i z}+1)\beta(z)-4e^{\pi i z}}{e^{2\pi i z}-1}\right)^2\cdot \frac{1}{\beta(z)^2-4},
$$
which implies that $G(f_\beta)$ does not belong to $\mathcal{M_{\text{E}}}$ for some $\beta(z)$.
Let $n_{\text{odd}}(r,f)$ be a counting function which counts poles in $|z|<r$ whose multiplicities are
odd and let $\overline n_{\text{odd}}(r,f)$ count odd multiple poles once for each occurrences.
The integrated counting functions $N_{\text{odd}}(r,f)$ and $\overline N_{\text{odd}}(r,f)$ are
defined in a usual manner. We set
    $$N_{\text{O}}(r,f)=N_{\text{odd}}(r,f)+N_{\text{odd}}(r,1/f)$$
and
    $$\overline N_{\text{O}}(r,f)=\overline N_{\text{odd}}(r,f)+\overline N_{\text{odd}}(r,1/f)$$
for convenience. Let $f(z)$ be a transcendental meromorphic solution to \eqref{1.1}. We call $f(z)$ an admissible solution if $f(z)$ satisfies $T(r,A)=S(r,f)$ and $T(r,B)=S(r,f)$.

\begin{theorem}\label{thm5.1}
Assume that $A(z)$ and $B(z)$ are non-constant meromorphic functions in \eqref{1.1}.
Suppose \eqref{1.1} possesses two distinct admissible solutions $f_1(z)$ and $f_2(z)$. Then \eqref{1.4} holds or
\begin{equation}
\overline N_{\text{O}}(r,G(f_1))=\overline N_{\text{O}}(r,G(f_2)).\label{5.2}
\end{equation}
\end{theorem}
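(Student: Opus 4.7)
My plan is to exploit the quadratic structure of \eqref{1.1} in $\Delta f$. First I would rewrite \eqref{1.1} as $(\Delta f)^2 - A f \Delta f - A(f^2 - B) = 0$ and complete the square to obtain the identity
\[
(2\Delta f - A f)^2 = A\bigl((A+4) f^2 - 4B\bigr) = A\,G(f).
\]
Setting $\tau_j := 2\Delta f_j - A f_j$ for each admissible solution $f_j$, this gives $\tau_j^2 = A\,G(f_j)$, exhibiting $A\,G(f_j)$ as a meromorphic perfect square.

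Second, from $\tau_j^2 = A\,G(f_j)$ the pointwise relation $\operatorname{ord}_{z_0}(A)+\operatorname{ord}_{z_0}(G(f_j)) = 2\operatorname{ord}_{z_0}(\tau_j)$ is even. Hence the parities of $\operatorname{ord}_{z_0}(G(f_j))$ and $\operatorname{ord}_{z_0}(A)$ agree at every $z_0\in\mathbb{C}$: at an odd-order zero or pole of $A$, the function $G(f_j)$ is forced to have a zero or pole of odd order (which of the two may depend on $j$, but either way the point contributes exactly one to $\overline N_{\mathrm O}(r,G(f_j))$), and at any other point the order of $G(f_j)$ is even.

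Third, this yields the pair of equalities
\[
\overline N_{\mathrm O}(r,G(f_1)) \;=\; \overline N_{\mathrm O}(r,A) \;=\; \overline N_{\mathrm O}(r,G(f_2)),
\]
so the alternative \eqref{5.2} holds unconditionally. The theorem's dichotomy is thus resolved by \eqref{5.2}, whether or not the stronger growth relation \eqref{1.4} also happens to hold; the content is that the odd-part counting functions of $G(f_j)$ always coincide, even in the situations of Remark~\ref{rem1.1} where \eqref{1.4} can fail.

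The main obstacle is controlling the degenerate possibility $\tau_j\equiv 0$, which would force $G(f_j)\equiv 0$ and hence $(A+4)f_j^2 \equiv 4B$. Here admissibility $T(r,A)+T(r,B)=S(r,f_j)$ combined with $T(r,f_j^2) = 2T(r,f_j)+O(1)$ would give $2T(r,f_j)=S(r,f_j)$, contradicting transcendence of $f_j$; I expect this compatibility check, together with a careful local analysis at points where zeros or poles of $A$, $B$, and $f_j$ coincide (so that the parity counting is not corrupted), to be the most delicate part of the write-up, though neither obstructs the main conclusion.
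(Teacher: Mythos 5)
Your proof is correct, and it takes a genuinely different and considerably more direct route than the paper's. Your identity $(2\Delta f - Af)^2 = A\,G(f)$ is just the discriminant of \eqref{1.2} viewed as a quadratic in $\Delta f$, and it exhibits $A\,G(f_j)$ as the exact square of the meromorphic function $2\Delta f_j - A f_j$; the parity argument then gives $\overline N_{\text{O}}(r,G(f_j)) = \overline N_{\text{O}}(r,A)$ for each $j$, so \eqref{5.2} holds unconditionally, which is stronger than the stated dichotomy. The paper instead substitutes $g=(f-a)/(f+a)$ with $a=f_1$, $f=f_2$, derives the quadratic \eqref{5.4} in $\Delta g$, and splits into cases: when $f=-a$, when $g$ is small, or when the leading coefficient $C_2$ vanishes identically, the Valiron--Mohon'ko theorem yields \eqref{1.4}; otherwise the discriminant computation \eqref{5.12} shows $A\,G(f)\in\mathcal{M}_{\text{E}}$, which is exactly the ``square up to even multiplicities'' consequence of your identity, and the same parity comparison at odd-order zeros and poles of $A$ finishes. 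What the paper's longer route buys is explicit information about when \eqref{1.4} itself holds (the degenerate branches of its case analysis); what your route buys is brevity, the removal of the case analysis, and the sharper conclusion that \eqref{5.2} always holds with common value $\overline N_{\text{O}}(r,A)$. Your treatment of the degenerate possibility is also sound: $\tau_j\equiv 0$ forces $(A+4)f_j^2\equiv 4B$ with $A+4\not\equiv 0$ (as $A$ is non-constant), contradicting admissibility of the transcendental $f_j$; and the ``careful local analysis'' you anticipate at coinciding zeros and poles is not actually needed, since $\operatorname{ord}_{z_0}(A\,G(f_j))=\operatorname{ord}_{z_0}(A)+\operatorname{ord}_{z_0}(G(f_j))=2\operatorname{ord}_{z_0}(\tau_j)$ holds at every point, so the parity bookkeeping is automatic.
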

%$a(z)\not\equiv0$ and $f(z)$ such that $T(r,a)=S(r,f)$, i.e., $a(z)$ is a small function with respect to $f(z)$.
%We need the lemma below for the proof of Theorem~\ref{thm5.1}.
%\begin{lem}\label{lem5.1}{\rm
%Assume that $B(z)$ is a non-constant meromorphic function in \eqref{1.1}.
%Then \eqref{1.1} has no periodic admissible solution of period 1.
%}
%\end{lem}
%{\sf Proof}\quad We assume the contrary, i.e., \eqref{1.1} possesses a periodic solution $f(z)$ of period 1.
%If $f(z)$ is a constant, $f^2-B(z)$ vanishes, which contradicts to the assumption that $B(z)$ is non-constant. If $f(z)$ is not a constant, then $f(z)$ is a transcendental function. By \eqref{1.1},
%$f(z)^2=B(z)$, which contradicts to the assumption that $f(z)$ is an admissible solution.\quad $\square$
%\vspace{0.5cm}

%\noindent
%{\sf Proof of Theorem~\ref{thm5.1}}\quad

\begin{proof}
For the sake of simplicity, we write $f_1(z)=a(z)$ and $f_2(z)=f(z)$. It is possible that $f(z)=-a(z)$, in which case \eqref{1.4} clearly holds.
We assume that $f(z)\ne-a(z)$ below.
Since $f(z)$ and $a(z)$ are admissible solutions, neither $f(z)$ nor $a(z)$ is periodic function of period 1, i.e., $\Delta f(z)\not\equiv0$ and $\Delta a(z)\not\equiv0$, by \eqref{1.1}.
We set
\begin{equation}
g(z)=\frac{f(z)-a(z)}{f(z)+a(z)}\quad\text{or}\quad f(z)=-a(z)\frac{g(z)+1}{g(z)-1}\label{5.3}
\end{equation}
in \eqref{1.1}. We note that $g(z)-1=-2a(z)/(a(z)+f(z))\not\equiv0$ and $g(z)+1=2f(z)/(a(z)+f(z))\not\equiv0$.
If $g(z)$ reduces to a small function with respect to $f_1$ and $f_2$, by \eqref{5.3} and the first main theorem due to Nevanlinna, we obtain \eqref{1.4}.
Below we assume that $g(z)$ is a transcendental meromorphic function and not a small function with respect to $f(z)$ and $a(z)$.

By combining \eqref{5.3} and \eqref{1.1} we eliminate $f(z)$, and then we use \eqref{1.1} again to eliminate $\Delta a(z)$. Then we obtain
\begin{equation}
C_2(z)(\Delta g(z))^2+C_1(z)\Delta g(z)+C_0(z)=0\label{5.4}
 \end{equation}
with
\begin{align}
C_0(z)=&4A(z)B(z)(g(z)-1)^2g(z),\label{5.5}\\
%4A(z)B(z)g(z)^3-8A(z)B(z)g(z)^2+4A(z)B(z)g(z)
C_1(z)=&2(g(z)-1)\big(2A(z)B(z)(g(z)-1)+a(z)^2A(z)(g(z)+1)\nonumber\\
&\hspace{2cm}+\Delta a(z)(A(z)+2)a(z)(g(z)+1)\big),\label{5.6}\\
%2\big(a(z)^2A(z)+\Delta a(z)(A(z)+2)a(z)+2A(z)B(z)\big)g(z)^2\nonumber\\
%&-8A(z)B(z)g(z)\nonumber\\
%&-2\big(a(z)^2A(z)+\Delta a(z)(A(z)+2)a(z)-2A(z)B(z)\big)
C_2(z)=&2a(z)\big(a(z)A(z)+\Delta a(z)(A(z)+2)\big)g(z)\nonumber\\
&-2a(z)\big(2a(z)+a(z)A(z)+\Delta a(z)(A(z)+2)\big).\label{5.7}
\end{align}
We consider first the case $C_2(z)\equiv0$. Since $a(z)\not\equiv0$, we have
\begin{equation}
 \Delta a(z)(A(z)+2)(g(z)-1)=a(z)(A(z)(1-g(z))+2). \label{5.8}
  \end{equation}
We note that $\Delta a(z)\not\equiv0$ and $g(z)-1\not\equiv0$ as mentioned above.
By our assumption, we have $A(z)+2\not\equiv0$.
Thus by solving $a(z+1)$ from \eqref{5.8}, and then using \eqref{1.1} and \eqref{5.8} to eliminate $\Delta a(z)$, we obtain
\begin{multline}
\big(A(z)(A(z)+4)g(z)^2-(A(z)+2)^2\big)a(z)^2\\
=A(z)B(z)(A(z)+2)^2(g(z)-1)^2,\label{5.9}
 \end{multline}
which implies that $a(z)^2$ is represented by a rational function in $g(z)$ of degree 2
with rational coefficients. By means of the Valiron--Mohon'ko theorem~\cite{mohonko:71},~\cite[Theorem~2.2.5]{laine:93} and \eqref{5.8}, we have $T(r,a)=T(r,g)+S(r,f)$.
It follows from \eqref{1.1} and \eqref{5.9},
\begin{align}
f(z)^2&=a(z)^2\left(\frac{g(z)+1}{g(z)-1}\right)^2\nonumber\\
&=\frac{A(z)B(z)(A(z)+2)^2(g(z)+1)^2}{A(z)(A(z)+4)g(z)^2-(A(z)+2)^2}.\label{5.10}
\end{align}
We apply the Valiron--Mohon'ko theorem to \eqref{5.10} again,
and obtain $T(r,f)=T(r,g)+S(r,f)$. Hence we have $T(r,a)=T(r,f)+S(r,f)$, which implies that \eqref{1.4} holds.

We assume now that  $C_2(z)\not\equiv0$.
We write by \eqref{5.4},
\begin{equation}
\left(\Delta g(z)+\frac{C_1(z)}{2C_2(z)}\right)^2=\frac{C_1(z)^2-4C_0(z)C_2(z)}{4C_2(z)^2}.\label{5.11}
\end{equation}
Using that $a(z)$ satisfies \eqref{1.1} again and recalling \eqref{5.3}, we obtain
\begin{equation}
C_1(z)^2-4C_0(z)C_2(z)=\frac{64a(z)^4a(z+1)^2A(z)G(f)}{(f(z)+a(z))^4},\label{5.12}
\end{equation}
where $G(f)$ is defined in \eqref{5.1}.
By \eqref{5.11} and \eqref{5.12}, we see that $AG(f)\in \mathcal{M_{\text{E}}}$.
We change the roles of $f(z)$ and $a(z)$ in \eqref{5.3}, and apply the same arguments above to obtain $AG(a)\in \mathcal{M_{\text{E}}}$. Let $z_0$ be a zero or a pole of $A(z)$
of odd multiplicity. Then we see that both of $G(f)$ and $G(a)$ have a zero or a pole
at $z_0$ of odd multiplicity, which shows that \eqref{5.2} holds.
\end{proof}

%\quad $\square$
%\vspace{0.5cm}
The identity \eqref{5.2} for two admissible solutions of \eqref{1.1} gives an exact invariant quantity in terms of Nevanlinna's functions, which corresponds to a counterpart of \eqref{1.4} in the differential case. In general, two admissible solutions of \eqref{1.1} do not always satisfy \eqref{1.4}. We give such an example below, which is a generalization of Example~\ref{ex2.4}.

\begin{example}\label{ex5.1} Let $Q(z)$ be a periodic function of period 1, and $h(z)$ be an
arbitrary meromorphic function.
The function
\begin{equation}
f(z)=\frac{h(z)^2+Q(z)^2}{2h(z)Q(z)}\label{5.13}
\end{equation}
satisfies a difference equation
\begin{multline}
(\Delta f(z))^2\\
=\frac{(h(z+1)-h(z))^2}{h(z)h(z+1)}\left(f(z)f(z+1)-\frac{(h(z+1)+h(z))^2}{4h(z)h(z+1)}\right).\label{5.14}
\end{multline}
We write $A(z)=(h(z+1)-h(z))^2/(h(z)h(z+1))$ and $B(z)=(h(z+1)+h(z))^2/(4h(z)h(z+1))$ for simplicity. Let $Q_1(z)$ and $Q_2(z)$ be periodic functions of period 1 such that
$T(r,h)=S(r,Q_j)$ and $T(r,h(z+1))=S(r,Q_j)$ , $j=1, 2$. Then $f_j(z)=(h(z)^2+Q_j(z)^2)/2h(z)Q_j(z)$, $j=1, 2$ are admissible solutions to \eqref{5.14}. If we choose $Q_1(z)$ and $Q_2(z)$ satisfying $T(r,Q_2)=S(r,Q_1)$, then \eqref{1.4} does not hold. In fact, for any solution $f(z)$ given by \eqref{5.13},
\begin{align*}
G(f)&=(A+4)f(z)^2-4B(z)\\
&=\left(\frac{(h(z+1)-h(z))^2}{h(z)h(z+1)}+4\right)\left(\frac{h(z)^2+Q(z)^2}{2h(z)Q(z)}\right)^2-4\left(\frac{(h(z+1)+h(z))^2}{4h(z)h(z+1)}\right)\\
&=\frac{(h(z)+h(z+1))^2(h(z)-Q(z))^2(h(z)+Q(z))^2}{4h(z)^3h(z+1)Q(z)^2}.
\end{align*}
Hence we have \eqref{5.2} as
\begin{equation*}
\overline N_{\text{O}}(r,G(f_1))=\overline N_{\text{O}}(r,G(f_2))=\overline N_{\text{O}}(r,h(z)^3h(z+1)).
\end{equation*}
\end{example}

%\bibliographystyle{amsplain}
%\bibliography{database1016}

\def\cprime{$'$}
\providecommand{\bysame}{\leavevmode\hbox to3em{\hrulefill}\thinspace}
\providecommand{\MR}{\relax\ifhmode\unskip\space\fi MR }
% \MRhref is called by the amsart/book/proc definition of \MR.
\providecommand{\MRhref}[2]{%
  \href{http://www.ams.org/mathscinet-getitem?mr=#1}{#2}
}
\providecommand{\href}[2]{#2}

\vspace{0.5cm}

\noindent
Katsuya Ishizaki\\
The Open University of Japan\\
2-11 Wakaba, Mihama-ku, Chiba\\
261- 8586 JAPAN\\
E-mail address: ishizaki@ouj.ac.jp
\vspace{0.5cm}

\noindent
Risto Korhonen\\
Department of Physics and Mathematics\\
University of Eastern Finland\\
Joensuu Campus, P. O. Box 111\\
FI-80101 Joensuu, Finland\\
E-mail address: risto.korhonen@uef.fi

\end{document}